\newlist{todolist}{itemize}{2}
\setlist[todolist]{label=$\square$}
\definecolor{shadecolor}{rgb}{1.0, 1.0, 0.65}
\pgfplotsset{
	humanaxes/.style={axis lines=center, every axis plot/.append style={very thick, mark size=3}, x axis line style=-, y axis line style=-},
	humanaxeslabels/.style={every axis x label/.style={at={(current axis.right of origin)},anchor=west},every axis y label/.style={at={(current axis.above origin)},anchor=south}},
	human/.style={humanaxes, humanaxeslabels}
}
\pgfplotsset{my style/.append style={axis x line=middle, axis y line=
middle, xlabel={$x$}, ylabel={$y$}, axis equal }}
\newcommand{\G}{\textbf{G}}
\newcommand{\St}{\textbf{S}}
\newcommand{\greg}{\ensuremath{\mathfrak{g}_{reg}}}
\newcommand{\fnum}{\ensuremath{F_i^{\#}}}
\DeclareMathOperator{\Id}{Id}
\DeclareMathOperator{\Sp}{Sp}
\pgfplotsset{soldot/.style={color=black,only marks,mark=*}}
\pgfplotsset{holdot/.style={color=black,fill=white,only marks,mark=*}}
\pgfplotsset{compat=1.15}
\newtheorem{thm}{Theorem}[section]
  \newtheorem{lemma}[thm]{Lemma}
  \newtheorem{defn}[thm]{Definition}
\def\Ddots{\mathinner{\mkern1mu\raise\p@
\vbox{\kern7\p@\hbox{.}}\mkern2mu
\raise4\p@\hbox{.}\mkern2mu\raise7\p@\hbox{.}\mkern1mu}}
\title{Maximal Unramified Tori in Symplectic Groups}
\author{Jacob Haley}
\date{\today}
\begin{document}

\maketitle

\begin{abstract}
For a reductive group over a $p$-adic field, DeBacker in \cite{debacker} gives a paramaterization of the conjugacy classes of maximal unramified tori using Bruhat-Tits theory. On the other hand, for unramified classical groups, Waldspurger in \cite{wald} gives a parameterization in terms of triples of partitions by constructing a regular semisimple element whose structure is governed by the parts of the partitions. In this paper, we compare these two parameterizations in the case of the symplectic group Sp$_{2n}$.  
\end{abstract}

\section{Introduction}
If $\textbf{G}$ is a reductive algebraic group over a $p$-adic field $F$ with residue field $\mathbb{F}_q$, an unramified torus is the group of $F$-rational points of a $F$-torus in $\G$ that splits over an unramified extension of $F$. In \cite{debacker}, the author gives a parameterization of $\G(F)$-conjugacy classes of maximal unramified tori using Bruhat-Tits theory. In particular, modulo an equivalence, the conjugacy classes are parameterized by pairs $(\textsf{F},\textsf{T})$ where $\textsf{F}$ is a facet in the $F$-points of the Bruhat-Tits building of $\G$ and $\textsf{T}$ is an elliptic $\mathbb{F}_q$-torus in the reductive quotient of $\G$ at the facet $F$. Given such a pair, $\textsf{T}$ can be lifted to a maximal unramified torus in $\G$, and this gives a representative for the conjugacy class associated to the pair. The elliptic $\mathbb{F}_q$-tori in the reductive quotient are in turn parameterized by the elliptic Frobenius conjugacy classes in the Weyl group of the reductive quotient. 

On the other hand, for symplectic, orthogonal, and unramified unitary groups, Waldspurger in \cite{wald} gives a parameterization of the maximal unramified tori in terms of triples of partitions. For each part of one of these partitions, he defines an $F$-algebra whose structure is determined by the partition, and he also constructs an $F$-endomorphism of the algebra. Taking the sum of these $F$-algebras, he obtains a symplectic $F$-vector space, and the sum of the $F$-endomorphisms determines a regular semisimple element for a torus in the associated conjugacy class. 

The goal of this paper is to provide a comparison of the two parameterizations in the case of the symplectic group, similar to the work in \cite{nevins1} and \cite{nevins2} for nilpotent orbits. In particular, given a triple of partitions $(\mu_0, \mu', \mu'')$ in the Waldspurger parameterization, we will associate a facet in the Bruhat-Tits building and an elliptic conjugacy class in the Weyl group of the reductive quotient which determine the same conjugacy class of tori as $(\mu_0,\mu',\mu'')$. We will also construct an inverse map, showing that the two indexing sets are in bijective correspondence. Note that similar results exist for orthogonal and unramified unitary groups, and they will be included in future work of the author.

The facet corrresponding to a triple $(\mu_0,\mu',\mu'')$ will correspond to a subdiagram of the extended Dynkin diagram of type $C_n$. This subdiagram will be a union of two subdiagrams of type $C_k$, which will be determined by the partitions $\mu'$ and $\mu''$, and subdiagrams of type $A_j$, which will be determined by the partition $\mu_0$. 

Each element in the Weyl group of the reductive quotient attached to this facet is a product of signed $k_i$-cycles for natural numbers $k_i < n$. We say that a signed $k_i$-cycle $\sigma$ is even if $\sigma^{k_i} = \Id$ and odd if $\sigma^{k_i} = -\Id$. The conjugacy class of an element is determined by its cycle-type, and the parts of the partitions $\mu_0, \mu',$ and $\mu''$ give us the cycle-type of the associated element $w$. In particular, each part $x_i$ of $\mu_0$ give us an even $x_j$-cycle while each part $y_j$ of $\mu'$ or $\mu''$ give us an odd $y_j$-cycle.

The paper is organized as follows. In Section 2, we introduce some of the general notation needed for our result, and we recall some of the structure of the symplectic group $\Sp_{2n}$. In Section 3, we discuss the Bruhat-Tits building of $\Sp_{2n}$, and we discuss the DeBacker parameterization in more detail. In Section 4, we discuss Waldspurger's parameterization for $\Sp_{2n}$. In Section 5, we present and prove the main result, discussing each partition in Waldspurger's parameterization in a separate subsection before putting everything together. Finally, in Section 6 we discuss an inverse to our construction, showing how one can move from a pair $(\textsf{F},w)$ to the associated triple of partitions in the Waldspurger construction.

The author thanks Stephen DeBacker for suggesting the problem, consistently providing helpful advice, and also providing the below figures for an alcove in Sp$_4$.

\section{General Notation and the Group Sp$_{2n}$}

Let $F$ be a finite extension of $\mathbb{Q}_p$. Let $\mathfrak{o}$ be the ring of integers of $F$ and $\mathfrak{p}$ the maximal ideal in $\mathfrak{o}$. Let $\varpi$ be a uniformizer of $F$. Let $q$ be the order of the residue field of $F$, which we denote by $\mathbb{F}_q$. We let Fr denote a topological generator of the absolute Galois group of $\mathbb{F}_q$. We can and do identify Fr with a topological generator of Gal$(F_{\text{un}}/F)$, where $F_{\text{un}} \subseteq \overline{F}$ is the maximal unramified extension of $F$. 


If $\textbf{C}$ is an algebraic group defined over $F$, we will by abuse of notation also use $\textbf{C}$ (bold) to denote the $\overline{F}$-points, and we will use $C$ (not bold) to denote the group of $F$-points.

Let $V$ be a vector space over $F$ of even dimension $2n$ for $p \geq 6n + 1$,\footnote{This is Waldspurger's assumption on $p$ in \cite{wald}. 
In our proofs, we require $q > 2n$ when choosing the generators $a_i$ in our sections 5.1 - 5.3. We also want $p \neq 2$, else regular semisimple elements may not exist in the residue field.} and fix an ordered basis $B = \{e_1, \dots, e_n\}$ of $V$. Let $q_V$ be the non-degenerate anti-symmetric bilinear form of $V$ whose matrix with respect to $B$ is the anti-diagonal matrix

\begin{center}
$\begin{pmatrix}
& & & & & 1 \\
& & & & \iddots & \\
& & & 1 & & \\
& & -1 & & & \\
& \iddots & & & & \\
-1 & & & & & \\
\end{pmatrix}$
\end{center}

Let $\textbf{G}$ be the symplectic group $\Sp_{2n}$ preserving $q_V$. Then $\textbf{G}$ is a connected reductive group defined over $F$. We will use $\mathfrak{g}$ to denote the vector space of $F$-rational points of the Lie algebra of $\G{}$, which we may identify with a subalgebra of the $F$-endomorphisms of $V$.

If $\St$ denotes the diagonal torus in $\G{}$, which of course is an $F$-split maximal torus in $\G$, we will let diag($t_1, \dots, t_n$) denote the matrix in $\St$ whose $(i,i)$ entry is $t_i$ for $i \leq n$ and $t_{2n + 1 - i}^{-1}$ for $i > n$. Let $W = N_\G(\St)/\St$ be the Weyl group of $\St$ in $\G$, and we will let $\Phi$ denote the set of roots of $\St$ in $\G$. We fix the simple system $\Delta = \{\alpha_1, \dots, \alpha_{n-1}, \beta\}$, where $\alpha_j$ is the root that takes diag($t_1, \dots, t_n$) to $t_jt_{j+1}^{-1}$ for $1 \leq j \leq n-1$ and $\beta$ is the root which takes diag($t_1, \dots, t_n$) to $t_n^2$. Then $e := 2\alpha_1 + \dots + 2\alpha_{n-1} + \beta$ is the highest root with respect to this simple system, and it takes diag($t_1, \dots, t_n$) to $t_1^2$.

We conclude this section by recalling the root space decomposition of $\mathfrak{g}$. We let $E_{i,j}$ denote the elementary matrix having 1 in the $(i,j)$ entry and 0 elsewhere. Then we have

\begin{itemize}
    \item For $1 \leq i \neq j \leq n,$ the matrix $E_{i,j} - E_{2n + 1 - j, 2n + 1 - i}$ spans the root space of the root sending diag($t_1, \dots, t_n$) to $t_it_j^{-1}$.
    
    \item For $1 \leq i < j \leq n$, $E_{i,2n + 1 - j} + E_{j,2n + 1 - i}$ spans the root space of the root sending diag($t_1, \dots, t_n$) to $t_it_j$, while $E_{2n + 1 - j,i} + E_{2n + 1 - i,j}$ spans the root space of the root sending diag($t_1, \dots, t_n$) to $(t_it_j)^{-1}$.
    
    \item For $1 \leq i \leq n$, $E_{i,2n+1-i}$ spans the root space of the root sending diag($t_1, \dots, t_n$) to $t_i^2$, while $E_{2n+1-i,i}$ spans the root space of the root sending diag($t_1, \dots, t_n$) to $t_i^{-2}$.
\end{itemize}

Notice that if $X \in \mathfrak{g}$ has the block form $\begin{pmatrix} X^{1,1} & X^{1,2} \\ X^{2,1} & X^{2,2} \end{pmatrix}$, where each block is an $n \times n$ matrix, then the root spaces of the roots in the subsystem spanned by $\alpha_1, \dots, \alpha_{n-1}$ are contained in $X^{1,1}$ and $X^{2,2}$, while every root space lying in $X^{1,2}$ or $X^{2,1}$ must be associated to a root so that the coordinate of $\beta$ with respect to the simple system $\Delta$ is non-zero. 

\section{The DeBacker Parameterization for \textbf{G}}


\indent Let $\mathcal{B}(G) = \mathcal{B}(\G,F)$ be the Bruhat-Tits building of $G$, which we identify with the Fr-fixed points of $\mathcal{B}(\G,F_{\text{un}})$. We let $\mathcal{A} = \mathcal{A}(S)$ denote the apartment of the diagonal torus $S$ in $\mathcal{B}(G).$ Within the apartment, we fix a fundamental alcove $\mathcal{C}$ whose walls are determined by the hyperplanes of $n+1$ affine roots so that the set of their gradients is precisely $\Delta \cup \{ e \}$. By a slight abuse of notation, we denote these hyperplanes by $H_{\alpha_1}, \dots, H_{\alpha_{n-1}}, H_\beta, H_e.$

The facets in our fundamental alcove $\mathcal{C}$ can be identified with proper subdiagrams of the extended Dynkin diagram of $\G$. In particular, given such a subdiagram $\Gamma$, the corresponding facet is the one vanishing at the hyperplanes $H_\gamma$, where $\gamma \in \{ e, \alpha_1, \dots, \alpha_{n-1}, \beta \}$ is a root so that the corresponding vertex in the extended Dynkin diagram occurs in $\Gamma.$ We adopt the following notation for the facets in the fundamental alcove:

\begin{defn}
If $a, b \geq 0$ and $x_1, \dots, x_t > 0$ are integers so that $a + b + \sum_{i = 1}^{t}x_i = n$, the facet $\langle a | x_1, \dots, x_t | b \rangle$ is defined as follows. Let $$H_e^a = \begin{cases} \mathcal{A} & a = 0 \\ H_e & a \neq 0 \end{cases},$$ and define $H_\beta^b$ analogously. Then $\langle a | x_1, \dots, x_t | b \rangle$ is the facet in $\mathcal{C}$ lying on \begin{align*} H_e^a \cap H_{\alpha_1} \cap \dots \cap H_{\alpha_{a - 1}} \cap \\
H_{\alpha_{a+1}} \cap \dots \cap H_{\alpha_{a + x_1 - 1}} \cap \\ 
\vdots \\ 
H_{\alpha_{a + x_1 + \dots + x_{t-1} + 1}} \cap \dots \cap H_{\alpha_{a + x_1 + \dots + x_{t} - 1}} \cap \\
H_{\alpha_{a + x_1 + \dots + x_t + 1}} \cap \dots \cap H_{\alpha_{n - 1}} \cap H_{\beta}^b, \end{align*} where by convention we ignore $H_{\alpha_{a + x_1 + \dots + x_{i-1} + 1}} \cap H_{\alpha_{a+ x_1 + \dots  + x_i - 1}}$ if $x_i = 1$.
\end{defn}

Note that the special vertex in $\mathcal{C}$ lying on the hyperplanes $H_{\alpha_1},\dots,H_{\alpha_n-1},H_\beta$ is the facet $\langle 0| \ |n \rangle$, while the other special vertex in $\mathcal{C}$, which lies on $H_e$ instead of $H_\beta,$ is the facet $\langle n| \ |0 \rangle$

\vspace{0.15in}

\noindent \textbf{Example:} Let $\G = \Sp_4$. In Figure \ref{fig:Sp4facets} below, we label the facets in the alcove $\mathcal{C}$ determined by the simple roots $\Delta = \{ \alpha, \beta \}$. There are three vertices and three edges in addition to the interior of the alcove.

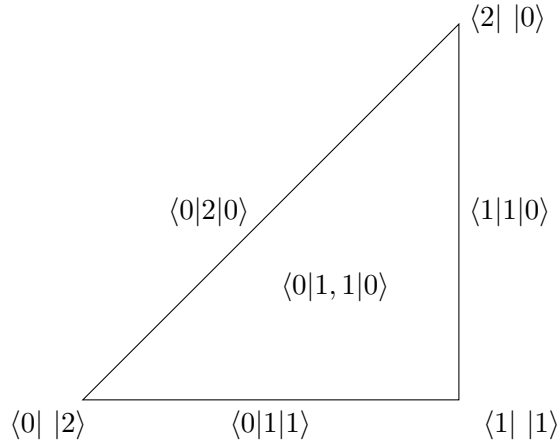
\begin{figure}[ht]
\centering
\begin{tikzpicture}
\draw (0,0) node[anchor=north]{\shortstack{$\langle 0 | \ | 2 \rangle$} \hphantom{hello}}
  -- (5,0) node[anchor=north]{$\hphantom{hellodolly} \langle 1 | \ | 1 \rangle$}
  -- (5,5) node[anchor=west]{\shortstack{$\langle 2 | \ | 0 \rangle$} $ \vphantom{P_{P_{P_{P_{P}}}}}$}
  -- cycle;
  \draw (2.5,0) node[anchor=north]{$\langle 0 | 1 | 1 \rangle$};
  \draw (1,2.5) node[anchor=west]{$\langle 0 | 2 | 0 \rangle$};
   \draw (5,2.5) node[anchor=west]{$\langle 1 | 1 | 0 \rangle$};
   \draw (2.5,1.5) node[anchor=west]{$\langle 0 | 1, 1 | 0 \rangle$};
\end{tikzpicture}
\caption{A labeling of the facets for Sp$_4$}
\label{fig:Sp4facets}
\end{figure}

\vspace{0.15in}

If $x \in \mathcal{B}(G),$ we let $G_x$ denote the parahoric subgroup of $G$ attached to $x$, and we let $G_x^+$ denote the prounipotent radical of $G_x$. Note that both $G_x$ and $G_x^+$ depend only on the facet $\textsf{F}$ to which $x$ belongs, so we may write $G_{\textsf{F}}$ and $G_{\textsf{F}}^+$. For a facet $\textsf{F}$ in $\mathcal{B}(G),$ the quotient $\textsf{G}_{\textsf{F}} := G_{\textsf{F}}/G_\textsf{F}^+$ is the group of $\mathbb{F}_q$-rational points of a reductive $\mathbb{F}_q$-group. The root system of this reductive group is the subdiagram of the extended Dynkin diagram associated to the facet. So in particular, the reductive quotient at the facet $\langle a | x_1, \dots, x_t | b \rangle$ has root system of type $$C_a \times A_{x_1 - 1} \times \dots \times A_{x_t - 1} \times C_b,$$ where again by convention we ignore terms with subscript 0. 

We now discuss the parameterization in \cite{debacker} for $\G$. Let $\mathcal{C}^S$ denote the set of $G$-conjugacy classes of maximal unramified tori. We will relate $\mathcal{C}^S$ to the set $I^m$ of pairs $(\textsf{F},\textsf{T}),$ where $\textsf{F}$ is a facet in $\mathcal{B}(G)$ and $\textsf{T}$ is a $\mathbb{F}_q$-minisotropic maximal torus in $\textsf{G}_{\textsf{F}}$. Given such a pair, DeBacker shows that $\textsf{T}$ can be lifted to a maximal unramified $F$-torus $T$, and he shows that a representative of each $G$-conjugacy class in $\mathcal{C}^S$ arises in this way. To attain a bijection, one needs to define an equivalence relation on $I^m$. 

Given a facet $\textsf{F}$ in an apartment $\mathcal{A}'$ of $\mathcal{B}(G)$, let $A(\mathcal{A}',\textsf{F})$ be the affine subspace of $\mathcal{A}'$ spanned by $\textsf{F}$. Then if $A(\mathcal{A}',\textsf{F}) = A(\mathcal{A}',\textsf{F}')$ for two facets $\textsf{F},\textsf{F}' \subseteq \mathcal{A}'$, we can identify the reductive quotients $\textsf{G}_{\textsf{F}}$ and $\textsf{G}_{\textsf{F}'}$ (see \cite[Lemma 3.5.1]{nilporb}). This identification then allows us to define an equivalence relation by saying $(\textsf{F},\textsf{T}) \sim (\textsf{F}',\textsf{T}')$ provided there is an apartment $\mathcal{A'}$ in $\mathcal{B}(G)$ and an element $g \in G$ so that 
\begin{itemize}
    \item $\emptyset \neq A(\mathcal{A}',\textsf{F}) = A(\mathcal{A}',g\textsf{F}')$ and
    \item $\textsf{T}$ is identified with $^g\textsf{T}'$ by the identification of $\textsf{G}_{\textsf{F}}$ with $\textsf{G}_{g\textsf{F}'}.$
\end{itemize}

Then this equivalence relation gives us a bijection 
$$I^m/\sim \\ \ \rightarrow \\ \\ \ \ \mathcal{C}^S.$$

Modeling upcoming work in \cite{adler}, we can refine this parameterization. First, note that since $\mathcal{C}$ is a fundamental alcove, every facet $\textsf{F}$ in $\mathcal{B}(G)$ is conjugate to at least one facet in $\mathcal{C}.$ Thus we may restrict to pairs $(\textsf{F},\textsf{T})$ with $\textsf{F} \subseteq \mathcal{C}$. Additionally, by \cite[Lemma 4.2.1]{debacker} or \cite{carter2}, the maximal $\mathbb{F}_q$-tori in the reductive quotient $\textsf{G}_{\textsf{F}}$ are parameterized by the conjugacy classes of the Weyl group $W_{\textsf{F}}$ associated to the image of $S \cap G_\textsf{F}$ in $\textsf{G}_{\textsf{F}}$, which we can and do the identify with a subgroup of the Weyl group $W$ of $S$ in $G$. Futhermore, the $\mathbb{F}_q$-minisotropic tori in the reductive quotient correspond to the elliptic conjugacy classes. Thus we can refine our parameterization to look at equivalence classes of pairs $(\textsf{F},w),$ where $\textsf{F}$ is a facet in $\mathcal{C}$ and $w$ is an elliptic element in the Weyl group $W_{\textsf{F}}.$

The elements in the Weyl group of $G$ can be identified with signed permutations of $\{ 1, \dots, n\}$. By ignoring the sign changes, each element $\tau$ of $W$ determines a permutation $\tau'$ of $\{ 1, \dots, n\},$ and this permutation can be written as a product of disjoint cycles. If $j$ is in a cycle of length $k$ in the cycle decomposition of $\tau'$, then $\tau^{k}(j) = \pm j$. If $\tau^{k}(j) = j$, then we say that the cycle is even, and if $\tau^{k}(j) = -j$, then we say that the cycle is odd. Mimicking the notation in \cite{carter1}, we write $C_k$ for an odd cycle of length $k$ and $A_{k - 1}$ for an even cycle of length $k$. In this notation, using Carter's classification of conjugacy classes in the Weyl group found in \cite{carter1}, given a facet $\textsf{F}$ in $\mathcal{C}$ of type $\langle a|x_1, \dots, x_t|b \rangle,$ the elliptic elements in $W_{\textsf{F}}$ are of the form $$(C_{a_1} \times \dots \times C_{a_k}) \times A_{x_1 - 1} \times \dots \times A_{x_t - 1} \times (C_{b_1} \times \dots \times C_{b_j}),$$ where $\sum_{i = 1}^k a_i = a $ and $\sum_{i = 1}^j b_i = b$.

For example, in $\Sp_{18}$, the facet $\langle 2 | 2, 1, 3 | 1\rangle$ is the facet vanishing on the hyperplanes $H_e, H_{\alpha_1}, H_{\alpha_3}, H_{\alpha_6}$, $H_{\alpha_7}$, and $H_\beta$. It has root system of type $C_2 \times A_1 \times A_2 \times C_1$, and the elliptic Weyl group elements are of type $C_2 \times A_1 \times A_2 \times C_1$ and $C_1 \times C_1 \times A_1 \times A_2 \times C_1$ in Carter's notation.

\vspace{0.15in}

\noindent \textbf{Example:} The conjugacy classes of maximal unramified tori in $\Sp_4$ are parameterized as in Figure \ref{fig:Sp4pairs} below. In this example, none of the facets in the alcove $\mathcal{C}$ determined by the simple roots $\Delta = \{ \alpha, \beta \}$ are equivalent, and there are nine pairs $(\textsf{F},w)$ up to equivalence.

\begin{figure}[ht]
\centering
\begin{tikzpicture}
\draw (0,0) node[anchor=north]{\shortstack{$C_2$\\$C_1 \times C_1$} \hphantom{hello}}
  -- (5,0) node[anchor=north]{$\hphantom{hellodolly} C_1 \times C_1$}
  -- (5,5) node[anchor=west]{\shortstack{$C_2$\\$C_1 \times C_1$} $ \vphantom{P_{P_{P_{P_{P}}}}}$}
  -- cycle;
  \draw (2.5,0) node[anchor=north]{$C_1$};
  \draw (1,2.5) node[anchor=west]{$A_1$};
   \draw (5,2.5) node[anchor=west]{$C_1$};
   \draw (2.5,1.5) node[anchor=west]{$1$};
\end{tikzpicture}
\caption{A labeling of the pairs $(\textsf{F},w)$ for Sp$_4$}
\label{fig:Sp4pairs}
\end{figure}

\vspace{0.15in}

\section{The Waldspurger Parameterization for \textbf{G}}

We will first discuss Waldspurger's parameterization of conjugacy classes of regular semisimple elements in $\mathfrak{g},$ which will then allow us to discuss his parameterization of the conjugacy classes of maximal unramified tori.

\subsection{Regular Semisimple Elements in \textbf{G}}

We let $\mathfrak{g}_{reg}$ denote the set of regular semisimple elements of $\mathfrak{g}$. We are going to recall the description of the $G$-conjugacy classes in $\greg$ given in \cite{wald}. To start, we will consider the following objects:

\begin{enumerate}
    \item A finite set $I$
    \item For all $i \in I$, a finite extension $F_{i}^{\#}$ of $F$ and a \fnum{}-algebra $F_i$ which is 2-dimensional over \fnum.
    \item For all $i \in I,$ elements $a_i$ and $c_i$ in $F_i^{\times}.$
\end{enumerate}

For all $i \in I$, we let $\tau_i$ be the unique non-trivial automorphism of $F_i$ over $\fnum$. We assume that our choices above satisfy

\begin{enumerate}[(a)]
\item For all $i$, $a_i$ generates $F_i$ over $F$.
\item For all $i,j \in I$ with $i \neq j$, there does not exist an $F$-linear isomorphism from $F_i$ to $F_j$ which maps $a_i$ to $a_j$
\item For all $i \in I$, $\tau_i(a_i) = -a_i$ and $\tau_i(c_i) = -c_i$
\item $2n = \sum_{i \in I}[F_i:F]$
\end{enumerate}

Thus the choice of $a_i$ determines $F_i$ and $F_i^{\#}$. Write $W = \bigoplus _{i\in I} F_i$, and define a symplectic form $q_W$ on $W$ by $$q_W(\sum_{i\in I}w_i,\sum_{i\in I}w_i') = \sum_{i \in I} [F_i:F]^{-1} \text{trace}_{F_i/F}(\tau_i(w_i)w_i'c_i).$$

We let $X_W$ be the element of End$_F(W)$ defined by $X_W(\sum_{i \in I} w_i) = \sum_{i\in I} a_iw_i.$ Then if we fix an isomorphism from $(W,q_W)$ onto $(V,q_V)$, the element $X_W$ identifies an element $X \in \mathfrak{g}$ which is regular and semisimple. The orbit does not depend on the choice of isomorphism, and if we call it $\mathcal{O}(I,(a_i),(c_i)),$ then all orbits in \greg{} are of this form.

Now for all $i \in I$, let sgn$_{F_i/F_i^{\#}}$ be the quadratic character of $F_i^{\# \times}$ associated to the algebra $F_i$. Let $I^*$ be the set of $i \in I$ so that $F_i$ is a field, i.e. so that sgn$_{F_i/F_i^{\#}}$ is non-trivial. Then for two families $(I,(a_i),(c_i))$ and $(I',(a_i'),(c_i'))$ satisfying the above conditions, we have that the corresponding orbits $\mathcal{O}(I,(a_i),(c_i))$ and $\mathcal{O}(I',(a_i'),(c_i'))$ are equal if and only if
\begin{enumerate}
    \item There is a bijection $\phi: I \rightarrow I'$.
    \item For all $i \in I$, there is an $F$-linear isomorphism $\sigma_i: F_{\phi(i)}' \rightarrow F_i$ so that
    \begin{enumerate}[i.]
        \item For all $i \in I$, $\sigma_i(a_{\phi(i)}') = a_i$
        \item For all $i \in I$, sgn$_{F_i/F_i^{\#}}(c_i\sigma_i(c_{\phi(i)}')^{-1}) = 1$
    \end{enumerate}
\end{enumerate}

We also have that the two orbits are in the same stable conjugacy class, i.e. there are regular semisimple elements of $\mathfrak{g}$ in the respective orbits which are conjugate by an element of $\G{},$ if maps $\phi$ and $\sigma_i$ satisfying all but condition ii. above exist. A stable class $\mathcal{O}^{\text{st}}(I,(a_i),(c_i))$ thus splits into exactly $(\mathbb{Z}/2\mathbb{Z})^{I^*}$ $G$-conjugacy classes.

\subsection{Unramified Tori in \G}
Given a partition $\lambda = (x_1, \dots, x_k),$ let S$(\lambda) := \sum_{i=1}^k x_i$ denote the sum of the parts of $\lambda$. We always order the parts of $\lambda$ so that $x_1 \geq \dots \geq x_k,$ and we do not allow parts of $\lambda$ to be zero. Let $\theta_{\text{max}}(V)$ be the set of triples of partitions $(\mu_0, \mu', \mu'')$ so that $S(\mu_0) + S(\mu') + S(\mu'') = n$. Then we have a bijection $\mathcal{C}^S \rightarrow \theta_{\text{max}}(V)$ given as follows:

Let $T$ be a maximal unramified torus, and fix a regular semisimple element $X$ in $\mathfrak{t}$, the Lie algebra of $T$ in $\mathfrak{g}$. Choose $(I,(a_i),(c_i))$ so that $X$ is in the orbit $\mathcal{O}(I,(a_i),(c_i))$. For an integer $m \geq 1,$ let $F^{(m)}$ be the unique unramified extension of $F$ of degree $m$. Since $T$ is unramified, for all $i \in I$, there exists an integer $m(i)$ so that $F_i^{\#} = F^{(m(i))}$. Let $I'$, resp. $I''$, be the set of $i \in I^*$ so that the valuation $v_{F_i}(c_i)$ of $c_i$ in $F_i$ is even, resp. odd. We define the triple $(\mu_0,\mu',\mu'')$ by setting $\mu'$, resp. $\mu''$, to be the the partition which has the same parts as the family $(m(i))_{i \in I'}$, resp. $(m(i))_{i \in I''}$. Finally, we define $\mu_0$ to be the partition having the same non-zero terms as the family $(m(i))_{i \in I \setminus I^*}$. Thus we have defined an element of $\theta_{\text{max}}(V)$ from a given maximal unramified torus $T$. According to \cite{wald}, it does not depend on the choice of $X$ in $T$, and $^gX$ defines the same element of $\theta_{\text{max}}(V)$ for all $g \in G$. Thus the construction gives a bijection $$\mathcal{C}^s \rightarrow \theta_{\text{max}}(V)$$.

\section{Main Result}

We are now ready to compare the two parameterizations.

\begin{thm}
Consider $(\mu_0,\mu',\mu'') \in \theta_{\text{max}}(V)$, with $\mu_0 = (x_1, \dots, x_r)$, $\mu' = (y_1, \dots, y_s)$, $\mu'' = (z_1, \dots, z_t).$ Then we have that the corresponding $G$-conjugacy class in $\mathcal{C}^S$ corresponds to the equivalence class of $(\textsf{F},w)$ in the DeBacker parameterization,
where \textsf{F} is the facet $\langle \text{S}(\mu'')|x_1, \dots, x_r|\text{S}(\mu') \rangle$ and $w$ is in the conjugacy class of type $$(C_{z_1} \times \dots \times C_{z_t}) \times A_{x_1 - 1} \times \dots \times A_{x_r - 1} \times (C_{y_1} \times \dots \times C_{y_s}).$$
\end{thm}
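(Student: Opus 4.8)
The plan is to construct, directly from the triple $(\mu_0,\mu',\mu'')$, an explicit family $(I,(a_i),(c_i))$ realizing the Waldspurger orbit, and simultaneously to exhibit a maximal unramified torus $T$ whose associated point in $\mathcal{B}(G)$ lies in the facet $\textsf{F} = \langle \mathrm{S}(\mu'')|x_1,\dots,x_r|\mathrm{S}(\mu')\rangle$ and whose image in the reductive quotient $\textsf{G}_{\textsf{F}}$ is the minisotropic torus of type $(C_{z_1}\times\cdots\times C_{z_t})\times A_{x_1-1}\times\cdots\times A_{x_r-1}\times(C_{y_1}\times\cdots\times C_{y_s})$. Since both the Waldspurger map $\mathcal{C}^S\to\theta_{\max}(V)$ and the refined DeBacker map $I^m/\!\sim\ \to\mathcal{C}^S$ are bijections, it suffices to check that one concrete torus $T$ simultaneously has the right Waldspurger invariant and sits (up to equivalence) at the claimed pair $(\textsf{F},w)$; then functoriality of both bijections forces the general statement. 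I would therefore organize the proof around a single well-chosen representative of the conjugacy class.

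First I would treat the three partitions separately, as the section headers of Section~5 suggest. For each part $x_i$ of $\mu_0$, set $F_i^\# = F^{(x_i)}$ and $F_i = F^{(x_i)}\oplus F^{(x_i)}$ (the split \'etale algebra, so $i\notin I^*$); choose $a_i$ of the form $(\alpha_i,-\alpha_i)$ with $\alpha_i$ a suitable unit generating $F^{(x_i)}$, and $c_i=(1,-1)$ so that $\tau_i(c_i)=-c_i$ and $v_{F_i}(c_i)=0$. On the corresponding $2x_i$-dimensional summand of $W$, the form $q_W$ is a split symplectic form, $X_W$ acts with eigenvalues the Galois conjugates of $\pm\alpha_i$, and the centralizer torus is an induced torus whose building point and reductive-quotient image give exactly an even $x_i$-cycle, i.e.\ a block $A_{x_i-1}$ sitting at a type-$A_{x_i-1}$ subdiagram. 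For each part $y_j$ of $\mu'$ choose $F_j^\#=F^{(y_j)}$ and $F_j$ the unramified \emph{field} extension of degree $2y_j$ (so $j\in I^*$), with $a_j$ a uniformizer-free generator satisfying $\tau_j(a_j)=-a_j$, and $c_j$ a unit ($v_{F_j}(c_j)$ even) with $\tau_j(c_j)=-c_j$; the centralizer is an odd $y_j$-cycle whose building point lands on the appropriate $H$-hyperplanes and contributes a $C_{y_j}$ block in the $C_{\mathrm{S}(\mu')}$ end of the facet. For each part $z_k$ of $\mu''$ do the same but with $c_k$ of odd valuation, $v_{F_k}(c_k)$ odd; this is the only change, and it shifts the relevant lattice chain so that the block lands in the $C_{\mathrm{S}(\mu'')}$ end of the facet instead, giving a $C_{z_k}$ block there.

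The technical heart is the local computation for a single block: given the $\fnum$-algebra $F_i$, the generator $a_i$, and the element $c_i$, one must (i) pin down the self-dual (or, for $\mu''$, almost-self-dual) $\mathfrak o$-lattice chain in $(F_i,q_W)$ stabilized by $\mathfrak o_{F_i}$-scaling, (ii) identify the vertex of $\mathcal{B}(G)$ it determines with the asserted facet of $\mathcal{C}$ by matching which affine roots vanish, using the explicit root-space description from Section~2 and the $\langle a|x_1,\dots,x_t|b\rangle$ conventions of Section~3, and (iii) reduce $X_W$ modulo the prounipotent radical and recognize its centralizer in $\textsf{G}_{\textsf{F}}$ as a Coxeter (minisotropic) torus of the stated type, invoking $\cite[\text{Lemma }4.2.1]{debacker}$. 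Assembling the blocks is then a direct-sum bookkeeping argument: the summands from $\mu_0$ stack up as the interior $A_{x_i-1}$ factors, the $\mu'$ summands as the right-hand $C$ of size $\mathrm{S}(\mu')$, the $\mu''$ summands as the left-hand $C$ of size $\mathrm{S}(\mu'')$, and the total dimension $2n$ check is condition (d). I expect step (ii)--(iii)---the precise correspondence between a lattice chain and a facet of $\mathcal{C}$, together with the valuation of $c_i$ controlling which of the two $C$-ends one lands in---to be the main obstacle, since it requires carefully tracking the $\varpi$-scaling normalization in Waldspurger's form against the hyperplane labeling; the rest is the kind of additivity that makes the bijection statement fall out once a single block is understood, and I would relegate the choice of admissible $a_i$ (needing $q>2n$ to separate conjugates and $p\neq 2$ for regular semisimple reductions) to the auxiliary remarks already flagged in the footnote.
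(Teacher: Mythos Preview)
Your overall strategy---build an explicit $(I,(a_i),(c_i))$ block by block, verify that the resulting regular semisimple element sits at the facet $\langle \mathrm S(\mu'')\,|\,x_1,\dots,x_r\,|\,\mathrm S(\mu')\rangle$, and read off the Weyl class from the Frobenius action on eigenvalues---matches the paper's. The treatment of the three partitions (split \'etale algebra for $\mu_0$, unramified quadratic field with unit $c$ for $\mu'$, same field but $c$ of odd valuation for $\mu''$) is also the same choice the paper makes.

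Where you diverge is in the mechanism for step (ii): you propose to locate the facet by pinning down the self-dual (or almost self-dual) $\mathfrak o$-lattice chain in each $(F_i,q_W)$ and then invoking the standard dictionary between lattice chains and parahorics for $\Sp_{2n}$. The paper does not touch lattice chains at all. Instead it builds, for every block, an explicit ordered symplectic basis (for the $\mu_0$ blocks this requires a nontrivial change-of-basis matrix $T$ to force the multiplication map into block-diagonal shape), writes the resulting $X_W$ as a concrete $2n\times 2n$ matrix, and then checks entry by entry that each nonzero coefficient lies in the root space of some $\alpha$ with $\textsf F\subset H_\alpha$ and has the correct valuation ($\mathfrak o$, $\mathfrak p$, or $\mathfrak p^{-1}$ depending on the sign of the $-e$-coefficient). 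The paper's route is more elementary and self-contained---no appeal to the lattice-chain model of the building is needed, and the reduction to the reductive quotient is literally ``look at the matrix mod $\mathfrak p$''---at the cost of a fair amount of hand-crafted linear algebra (the transformation $T$ and the dual-basis construction). Your lattice-chain route is more conceptual and would likely generalize more cleanly to other classical groups, but you would still have to do the bookkeeping you flag as the main obstacle: matching the $\varpi$-shift coming from odd $v(c_k)$ to the $H_e$ end of the alcove versus the $H_\beta$ end, which in the paper is made completely transparent by the explicit $\mathfrak p^{-1}$ versus $\mathfrak o$ entries in the off-diagonal blocks $M^{1,2}_{z_i}$ and $M^{1,2}_{y_j}$.
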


In particular, the tori corresponding to the special vertex of $\mathcal{C}$ vanishing at the hyperplanes $H_{\alpha_1},\dots,H_{\alpha_{n-1}}$ and $H_\beta$ are those corresponding to triples of the form $( \emptyset, \mu', \emptyset )$. Similarly, the tori vanishing corresponding to the special vertex of $\mathcal{C}$ vanishing at the hyperplanes $H_{\alpha_1},\dots,H_{\alpha_{n-1}}$ and $H_e$ are those corresponding to triples of the form $( \emptyset, \emptyset, \mu'' ).$ The diagonal torus $S$ corresponds to the triple $( (1, \dots, 1), \emptyset, \emptyset ).$

\vspace{0.15in}

\noindent \textbf{Example:} In Figure \ref{fig:Sp4comp} below, for each pair $(\textsf{F},w)$ in Sp$_4$, we give the corresponding triple in the Waldspurger paramaterization.

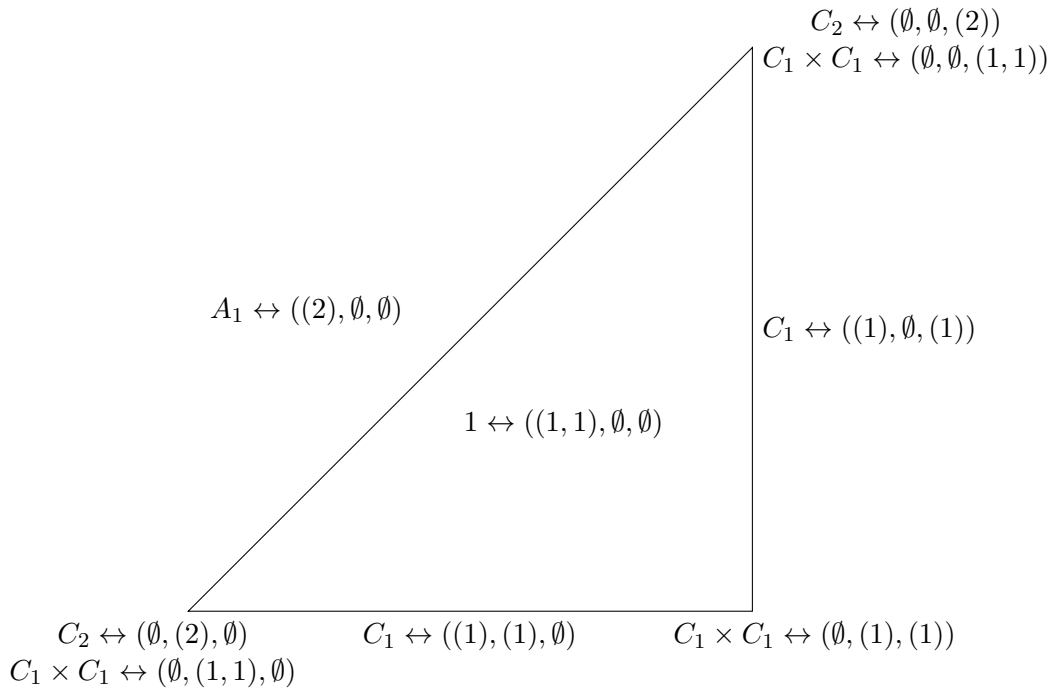
\begin{figure}[ht]
\centering
\begin{tikzpicture}
\draw (0,0) node[anchor=north]{\shortstack{$C_2 \leftrightarrow (\emptyset,(2),\emptyset)$\\$C_1 \times C_1 \leftrightarrow (\emptyset, (1,1), \emptyset) $} \hphantom{hello}}
  -- (7.5,0) node[anchor=north]{$\hphantom{hellodolly} C_1 \times C_1 \leftrightarrow (\emptyset,(1),(1))$}
  -- (7.5,7.5) node[anchor=west]{\shortstack{$C_2 \leftrightarrow (\emptyset,\emptyset,(2))$\\$C_1 \times C_1 \leftrightarrow (\emptyset,\emptyset,(1,1))$} $ \vphantom{P_{P_{P_{P_{P}}}}}$}
  -- cycle;
  \draw (3.75,0) node[anchor=north]{$C_1 \leftrightarrow ((1),(1),\emptyset)$};
  \draw (3,4) node[anchor=east]{$A_1 \leftrightarrow ((2),\emptyset,\emptyset)$};
   \draw (7.5,3.75) node[anchor=west]{$C_1 \leftrightarrow ((1),\emptyset,(1))$};
   \draw (5,2.5) node{$1 \leftrightarrow ((1,1),\emptyset,\emptyset)$};
\end{tikzpicture}
\caption{The Theorem for Sp$_4$}
\label{fig:Sp4comp}
\end{figure}

\vspace{0.15in}

We will prove this theorem throughout the rest of this section. We will deal with each of the partitions in a separate subsection before putting everything together at the end. For each partition, we will carefully construct the necessary field extensions, algebras, and generators $a_i$. We will then use our construction to produce an ordered symplectic basis and determine the structure of the matrix of the multiplication map with respect to our choice of ordered basis. We will pay particularly close attention to:

\begin{enumerate}
    \item The location of the non-zero entries of the matrix of multiplication by $a_i$, particularly in our analysis of $\mu_0$. We need to choose our basis of $W$ so that $X$ lies in the parahoric of our proposed facet $\textsf{F}$ and descends to a regular semisimple element in the reductive quotient. In particular, we will choose our ordered basis so that all of the non-zero entries of $X$ will lie on either the diagonal or in the root space of a root $\alpha$ so that $\textsf{F} \subseteq H_{\alpha}$.
    
    \item The way in which Fr acts on the eigenvalues of the matrix of multiplication by $a_i$. When we put everything together, we will use this information to determine the conjugacy class in $W$ associated to our torus.
\end{enumerate}

\subsection{The Partition $\mu_0$}

We begin with a lemma:

\begin{lemma}
The degree $m$ extension $\mathbb{F}_q^m$ of $\mathbb{F}_q$ contains an element $\eta$ so that $\mathbb{F}_q^m = \mathbb{F}_q(\eta^2)$. In other words, $\eta^2$ is a primitive element of the extension $\mathbb{F}_q^m/\mathbb{F}_q$.
\end{lemma}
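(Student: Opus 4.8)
The plan is to count elements and use the fact that a finite field extension $\mathbb{F}_q^m/\mathbb{F}_q$ is primitive, i.e.\ generated by any element not lying in a proper intermediate field. Concretely, an element $\alpha \in \mathbb{F}_q^m$ fails to generate $\mathbb{F}_q^m$ over $\mathbb{F}_q$ precisely when $\alpha$ lies in $\mathbb{F}_q^d$ for some proper divisor $d$ of $m$; since the number of such $\alpha$ is at most $\sum_{d \mid m,\, d < m} q^d < q^m$, the ``bad'' set is a proper subset. So it suffices to show that the squaring map $\eta \mapsto \eta^2$ does not have image contained in this bad set — equivalently, that the preimage of the bad set under squaring is not all of $\mathbb{F}_q^m$.

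The key step is to bound the size of $\{\eta \in \mathbb{F}_q^m : \eta^2 \in \mathbb{F}_q^d \text{ for some proper divisor } d \text{ of } m\}$. The squaring map on $(\mathbb{F}_q^m)^\times$ is at most $2$-to-$1$ (it is $2$-to-$1$ since $p \neq 2$, which we may assume as in the paper's standing hypotheses), and $0$ maps to $0$. Hence for each proper divisor $d$ of $m$, the set of $\eta$ with $\eta^2 \in \mathbb{F}_q^d$ has size at most $2q^d - 1 \le 2q^d$. Summing over the proper divisors $d \mid m$ and crudely bounding $\sum_{d \mid m,\, d < m} q^d \le \sum_{j=1}^{\lfloor m/2 \rfloor} q^j = \frac{q^{\lfloor m/2\rfloor + 1} - q}{q - 1}$, one gets a total of at most roughly $2 \cdot \frac{q^{\lfloor m/2 \rfloor + 1}}{q-1}$, which for $q \ge 3$ (or even $q = 2$, handled separately) and any $m \ge 1$ is strictly less than $q^m$. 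Therefore some $\eta \in \mathbb{F}_q^m$ has $\eta^2$ generating $\mathbb{F}_q^m$ over $\mathbb{F}_q$.

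The main obstacle is getting the counting inequality to hold for the small cases, particularly $m = 1$ and $m = 2$. For $m = 1$ the statement is trivial (take $\eta = 1$, so $\eta^2 = 1$ and $\mathbb{F}_q(1) = \mathbb{F}_q$). For $m = 2$ the only proper divisor is $d = 1$, and we need an $\eta \in \mathbb{F}_{q^2}$ with $\eta^2 \notin \mathbb{F}_q$; the bad set $\{\eta : \eta^2 \in \mathbb{F}_q\}$ has size at most $2q - 1 < q^2$ for $q \ge 3$, which covers our setting since $p \neq 2$ forces $q$ odd, hence $q \ge 3$. An alternative, cleaner route that avoids case-checking entirely: observe that $\eta \mapsto \eta^2$ is surjective on $(\mathbb{F}_q^m)^\times / (\text{squares})$-cosets in the sense that every element of $\mathbb{F}_q^m$ is of the form $\eta^2$ or $\varpi'\eta^2$ for a fixed non-square $\varpi'$; pick a primitive element $\gamma$ of $\mathbb{F}_q^m/\mathbb{F}_q$, and note that at least one of $\gamma$ or $\gamma \cdot (\text{non-square in }\mathbb{F}_q)$ is a square in $\mathbb{F}_q^m$ (since the product of two non-squares is a square, and $\mathbb{F}_q^\times \subseteq (\mathbb{F}_q^m)^\times$ with index considerations), and multiplying a primitive element by a nonzero scalar from $\mathbb{F}_q$ keeps it primitive. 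Then set $\eta$ to be a square root of whichever of the two is a square. I would present the counting argument as the main proof and perhaps remark on this slicker alternative.
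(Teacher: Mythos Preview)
Your main counting argument is correct, but it takes a genuinely different route from the paper's. The paper's proof is constructive rather than existential: it chooses $\eta$ to be a generator of the cyclic group $(\mathbb{F}_{q^m})^\times$ and observes that (for $p\neq 2$) the multiplicative order of $\eta^2$ is $(q^m-1)/2$, which exceeds $q^k-1$ for every $k<m$, so $\eta^2$ cannot lie in any proper subfield. This is shorter, needs no small-$m$ case checks, and produces a specific $\eta$ that the paper goes on to use in its construction. Your pigeonhole argument, by contrast, only yields existence, though it has the mild advantage of generalising immediately to $\eta\mapsto\eta^r$ for other small $r$.

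One warning about your ``slicker alternative'': the step asserting that at least one of $\gamma$ or $\gamma c$ (with $c$ a nonsquare in $\mathbb{F}_q$) is a square in $\mathbb{F}_{q^m}$ tacitly assumes that $c$ remains a nonsquare in $\mathbb{F}_{q^m}$. That fails whenever $m$ is even: for odd $q$ one has $c^{(q^m-1)/2}=c^{(q-1)(1+q+\cdots+q^{m-1})/2}=1$ precisely when $1+q+\cdots+q^{m-1}\equiv m\pmod 2$ is even, so every element of $\mathbb{F}_q^\times$ becomes a square in $\mathbb{F}_{q^m}$ for even $m$, and multiplying $\gamma$ by $c$ does not change its square-class. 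As written, the alternative argument therefore only works for odd $m$; either drop it or repair the even case separately.
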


\begin{proof}


We will treat the cases of $p=2$ and $p \neq 2$ separately. First, suppose $p \neq 2$, and choose an element $\eta \in \mathbb{F}_q^m$ so that $\eta$ is a cyclic generator of the multiplicative group of $\mathbb{F}_q^m.$ Then $\eta$ has order $q^m - 1,$ and since $p \neq 2$, $\eta^2$ has order $(q^m - 1)/2 = \dfrac{q - 1}{2}(1 + q + \dots + q^{m-1}).$ If $\eta^2$ is not a primitive element of the extension $\mathbb{F}_q^m/\mathbb{F}_q$, then $\eta^2$ must belong to some proper subextension, and so the order of $\eta^2$ is less or equal $q^{k}-1$ for some $k < m$. But $$q^k - 1 < q^k < 1 + q + \dots + q^{m-1} < \dfrac{q - 1}{2}(1 + q + \dots + q^{m-1})$$ since $q > p > 2$, and so $\eta^2$ is the desired primitive element of the extension $\mathbb{F}_q^m/\mathbb{F}_q.$

On the other hand, if $p = 2,$ then since 2 is then coprime to $q^m - 1$, $\eta^2$ is another cyclic generator of the multiplicative group of $\mathbb{F}_q^m$ for any cyclic generator $\eta$ of $\mathbb{F}_q^m,$ and so we are done.
\end{proof}

For the part $x_1$ of the partition $\mu_0$, we may thus assume that $F^{(x_1)} = F(\eta^2)$ for some $\eta \in \mathfrak{o}_{F^{(x_1)}}^{\times}$. Let $f(x)$ be the minimal polynomial of $\eta^2$ over $F$ so that $F^{(x_1)} = F[x]/(f(x)).$ Note that $F^{(x_1)} = F(\eta)$ as well, and so the minimal polynomial $g(x)$ of $\eta$ over $F$ also has degree $x_1$, which we will use shortly. Then we define the algebra $F_{x_1} := F^{(x_1)}[y]/(y^2 - \eta^2) \cong F[y]/(f(y^2))$. Then $F_{x_1}$ is 2-dimensional over $F^{(x_1)}$ and is not a field, meaning that it is the algebra corresponding to $x_1$ in the Waldspurger construction. We have that the non-trivial $F^{(x_1)}$-automorphism $\tau$ sends $y$ to $-y$. 

We set $c_1 = y$ so that our symplectic form on $F_{x_1}$, viewed as an $F$-algebra, is given by the pairing $\langle v_1, v_2 \rangle = \frac{1}{2x_1}\text{trace}_{F_{x_1}/F}(\tau(v_1)v_2y)$ for $v_1, v_2 \in F_{x_1}$. We set $a_1 = y$ so that our $F$-endomorphism $X_{x_1}$ on $F_{x_1}$ is given by multiplication by $y$.

We will now determine the eigenvalues of our multiplication map $X_{x_1}$, and we will determine how Fr acts on them. Again viewing $F_{x_1}$ as an $F$-vector space, we begin by fixing the ordered basis $\{1, y, \eta^2, \eta^2y, \eta^4, \dots, \eta^{2(x_1 - 1)}, \eta^{2(x_1 -1)}y\}$. Then the matrix of $X_{x_1}$ with respect to this basis is in rational canonical form, and so the characteristic polynomial of $X_{x_1}$ is $f(x^2)$. We then claim that $f(x^2) = \pm g(x)g(-x).$ To see this, first note that $\eta$ and $-\eta$ cannot both be roots of $g(x)$. If they were, we would have $\eta = \text{Fr}^k(-\eta)$ for some integer $k < x_1$, as every unramified extension of $F$ is cyclic. But then we would have that $\eta^2 = \text{Fr}^k(\eta^2),$ implying that the extension $\mathbb{F}_q(\eta^2)$ has degree less than $x_1$ and hence contradicting our choice of $\eta$. Thus the minimal polynomial of $-\eta$, which is $\pm g(-x)$, is not equal to $g(x)$, and so in particular $g(x)$ and $g(-x)$ have distinct roots. But $\eta$ and $-\eta$ are both roots of the monic polynomial $f(x^2)$, and so we must have that both $g(x)$ and $\pm g(-x)$ divide $f(x^2)$. Since the degrees match and the roots are distinct, we have our claim.

With this equality, we have that Fr permutes the roots of $g(x)$, which are elements of $\mathfrak{o}_{F^{(x_1)}}^\times$ having distinct images in the residue field, and the roots of $g(-x)$, which are the negatives of the roots of $g(x)$, cyclically, and so Fr acts on the roots of $f(x^2)$, which are the eigenvalues of $X_{x_1}$, via two $x_1$-cycles. 

We will now begin working towards a symplectic basis. To begin, we reorder our basis as $$\{1,\eta^2,\eta^4,\dots,\eta^{2(x_1-1)},y,\eta^2y,\dots,\eta^{2(x_1-1)}y\}.$$ Then with respect to the basis, $X_{x_1}$ is a block matrix of the form

\begin{center}
    $\begin{pmatrix}
    0 & A \\
    \Id & 0 
    
    \end{pmatrix}$
    
\end{center}

\noindent where each block is a $x_1 \times x_1$ matrix with non-zero entries in $\mathfrak{o}$. However, if we consider $F(\eta^2) = F^{(x_1)}$ as an $F$-vector space and let $Y$ be the linear transformation given by multiplication by $\eta^2$, we notice that $A$ is also the matrix of $Y$ with respect to the ordered basis $B = \{1, \eta^2, \eta^4, \dots, \eta^{2(x_1-1)} \}$. Thus the matrix $A$ has a natural square root $\sqrt{A}$ obtained by taking the matrix of the linear transformation on $F(\eta^2)$ given by multiplication by $\eta$ with respect to the basis $B$. We also see that $\sqrt{A}$ is invertible, with inverse given by multiplication by $\eta^{-1}$ in $F(\eta^2).$ Note that $\sqrt{A}$ and $\sqrt{A}^{-1}$ both have entries in $\mathfrak{o}$.





Now let $T$ be the block matrix

\begin{center}
    $\begin{pmatrix}
    \sqrt{A} & -\sqrt{A} \\
    \Id & \Id
    
    \end{pmatrix}$
\end{center}

\noindent and let $B_T$ be the transformed ordered basis $B_T = \{T \cdot 1, T \cdot \eta^2, \dots, T \cdot \eta^{2(x_1-1)}, T \cdot y, \dots, T \cdot \eta^{2(x_1-1)}y \}.$ Then the matrix of $X_{x_1}$ with respect to $B_T$ is given by

\begin{center}
    $\begin{pmatrix}
    \frac{1}{2}\sqrt{A}^{-1} & \frac{1}{2}\Id \\
    -\frac{1}{2}\sqrt{A}^{-1} & \frac{1}{2}\Id
    \end{pmatrix}$
    $\begin{pmatrix}
    0 & A \\
    \Id & 0 
    
    \end{pmatrix}$
    $\begin{pmatrix}
    \sqrt{A} & -\sqrt{A} \\
    \Id & \Id
    
    \end{pmatrix}$
    =
    $\begin{pmatrix}
    \sqrt{A} & 0 \\
    0 & -\sqrt{A}
    \end{pmatrix}$
    
\end{center}

From here, we finally construct a symplectic basis. First, we claim that $\langle T\cdot \eta^{2i}, T \cdot \eta^{2j} \rangle = 0 = \langle T\cdot \eta^{2i}y, T \cdot \eta^{2j}y \rangle$ for all $i,j$ in $\{0, \dots, x_1-1\}$. To see this, note that $$T \cdot \eta^{2i} = \eta \cdot \eta^{2i} + \eta^{2i}y = \eta^{2i + 1} + \eta^{2i}y$$ since $\sqrt{A}$ acts on $F^{(x_1)}$ by multiplication by $\eta$ by construction and $$T \cdot \eta^{2i} y = -\eta \cdot \eta^{2i} + \eta^{2i} y = -\eta^{2i + 1} + \eta^{2i} y.$$
Thus 
\begin{align*}
    \langle T\cdot \eta^{2i}, T \cdot \eta^{2j} \rangle &= \langle \eta^{2i + 1} + \eta^{2i}y, \eta^{2j + 1} + \eta^{2j}y \rangle = \frac{1}{2x_1} \text{trace}_{F_{x_1}/F} ((\eta^{2i+1} - \eta^{2i}y)(\eta^{2j + 1} + \eta^{2j}y)y)\\
    &= \frac{1}{2x_1} \text{trace}_{F_{x_1}/F} (y\eta^{2i + 2j + 2} + \eta^{2i + 2j + 3} - \eta^{2i + 2j + 3} - y\eta^{2i + 2j + 2})  \\
    &= 0.
\end{align*}
The computation for $\langle T\cdot \eta^{2i}, T \cdot \eta^{2j} \rangle$ is nearly identical.

Thus we can decompose the $F$-vector space $F_{x_1}$ into two Lagrangian subspaces, $x_1$-dimensional subspaces $F_{x_1}^+$ = span$_F\{T \cdot 1, T \cdot \eta^2, \dots, T \cdot \eta^{2(x_1-1)}\}$ and $F_{x_1}^-$ = span$_F\{T \cdot y, \dots, T \cdot \eta^{2(x_1-1)}y \}$ so that $\langle x, y \rangle = 0$ for all $x,y \in F_{x_1}^+,$ resp. $F_{x_1}^-$. Then for $v' \in F_{x_1}^-$, the map which sends $v \in F_{x_1}^+$ to $\langle v,v' \rangle$ gives a linear functional on $F_{x_1}^+$. Since the symplectic form is non-degenerate, we obtain an isomorphism $F_{x_1}^- \xrightarrow{\sim} (F_{x_1}^{+})^*,$ where $(F_{x_1}^{+})^*$ denotes the dual space of $F_{x_1}^+.$ We can then construct a dual basis $\{ \gamma_0^*, \dots, \gamma_{x_1-1}^* \}$ of $(F_{x_1}^+)^*$ so that $\gamma_i^*(T \cdot \eta^{2j}) = \begin{cases} 0 & i\neq j \\ 1 & i = j \end{cases}$, and letting $B'$ be the ordered basis $\{ T \cdot 1, T \cdot \eta^2, \dots, T \cdot \eta^{2(x_1-1)}, \gamma_{x_1-1}, \dots, \gamma_0\},$ where $\gamma_i$ is the inverse image of $\gamma_i^*$ under our isomorphism $F_{x_1}^- \xrightarrow{\sim} (F_{x_1}^{+})^*$, we have obtained our symplectic basis.

Note that the change of basis matrix from $B_T$ to $B'$ has the block-diagonal form $\begin{pmatrix} \Id & 0 \\ 0 & M \end{pmatrix}$ for some $x_1 \times x_1$ matrix $M$, and so the matrix of $X_{x_1}$ with respect to $B'$ also has block diagonal form 
\begin{center}
    $\begin{pmatrix}
    \sqrt{A} & 0 \\
    0 & N
    
    \end{pmatrix}$
\end{center}

\noindent where $N$ is the $x_1 \times x_1$ matrix given by conjugating $-\sqrt{A}^\intercal$ by the matrix
$\begin{pmatrix}
& & 1 \\
& \iddots & \\
1 & & 
\end{pmatrix}.$
Thus we note that the matrix of $X_{x_1}$ with respect to the ordered basis $B'$ still has entries in $\mathfrak{o}$. 


For future use, we let $\gamma_{j + 1}^{x_1,+} := T \cdot \beta^{2j}$ and $\gamma_{j+1}^{x_1,-} := \gamma_{j}$ for $j \in \{ 0, \dots, x_1 - 1 \}$ so that $B' = \{ \gamma_1^{x_1,+}, \dots, \gamma_{x_1}^{x_1,+}, \gamma_{x_1}^{x_1,-}, \dots, \gamma_{1}^{x_1,-}\}$. We also let $$\begin{pmatrix} M_{x_1}^{1,1} & 0 \\ 0 & M_{x_1}^{2,2} \end{pmatrix}$$ denote the matrix of $X_{x_1}$ with respect to this ordered basis.

\vspace{0.1in}

\noindent \textbf{Remark:} Note that we could have constructed the symplectic basis before applying the transformation $T$ to our original basis. However, the resulting matrix of $X_{x_1}$ would not have the block-diagonal form we have just obtained, which is essential to ensuring our resulting element will be a lift of a regular semisimple element in the reductive quotient of our eventual facet $\textsf{F}$. In particular, we want the non-zero entries of this piece to only show up in the root spaces in the subsystem spanned by the simple roots $\alpha_i$.

\vspace{0.1in}

Thus we are done with the entry $x_1$ in $\mu_0$. For the other $x_j \in \mu_0$, we can repeat this process. The only difference is if $x_{j_1} = x_{j_2}$ for $j_1 \neq j_2,$ we need to rescale our choice of $a_{j_2}$ by a representative of a non-trivial class in $\mathbb{F}_q^{\times} / (\mathbb{F}_q^{\times} \cap R_{2x_{j_1}})$, where $R_{2x_{j_1}}$ denotes the group of $2x_{j_1}$th roots of unity which belong to $\mathbb{F}_q^\times$. The resulting matrix of $X_{x_{j_2}}$ and its eigenvalues are then scaled by the same element so that the entries of $X_{x_{j_2}}$ still lie in $\mathfrak{o}$ and the eigenvalues of $X_{x_{j_2}}$ still lie in $\mathfrak{o}_{F^{(x_1)}}^\times$. Since $q > 2n$, we can scale so that for all $x_{j_m} = x_{j_1}$, each of the $a_{j_m}$ are multiplied by representatives of different classes. We need to do this so that condition (b) at the start of Section 4.1 is satisfied and so that the the eigenvalues of all the matrices $X_{x_{j_m}}$ are distinct elements of $\mathfrak{o}_{F_{\text{un}}}^\times$ with distinct images in $\overline{\mathbb{F}}_q^\times$.


\subsection{The Partition $\mu'$}

We choose a lift $\delta \in \mathfrak{o}_{F^{(y_1)}}^\times$ of an element of the degree $y_1$ extension $\mathbb{F}_q^{y_1}$ of $\mathbb{F}_q$ which generates the cyclic group $(\mathbb{F}_q^{y_1})^{\times}$. Then we have that the degree $y_1$ unramified extension $F^{(y_1)}$ is equal to $F[\delta]$. Furthermore, the image of $\delta$ cannot have a square root in $(\mathbb{F}_q^{y_1})^{\times}$ since $(\mathbb{F}_q^{y_1})^{\times}$ has even order, and so the degree 2 unramified extension $F_{y_1}$ of $F^{(y_1)}$ is generated by $\delta^{1/2}$ for some fixed square root of $\delta$. We let $\tau$ denote the non-trivial element of the Galois group of the extension $F_{y_1}/F^{(y_1)},$ so that $\tau(\delta^{1/2}) = -\delta^{1/2}$.

We choose $a_{y_1} = c_{y_1} = \delta^{1/2}$ in the Waldspurger construction. Thus our $F$-endomorphism $X_{y_1}$ of $F_{y_1}$ is given by multiplication by $\delta^{1/2}$, and our symplectic form on $F_{y_1}$ is given by the pairing $\langle v_1, v_2 \rangle = \frac{1}{2y_1}\text{trace}_{F_{y_1}/F}(\tau(v_1)v_2\delta^{1/2})$ for $v_1,v_2 \in F_{y_1}$. Let $f(x)$ be the minimal polynomial of $\delta$ over $F$. Then $f(x^2)$ is the minimal polynomial of $\delta^{1/2}$ over $x$, and so it also the characteristic polynomial of $X_{y_1}$. Since $f(x^2)$ is the minimal polynomial of an unramified extension of $F$, we know that the eigenvalues of $X_{y_1}$ are in $\mathfrak{o}_{F^{(y_1)}}^\times$ and have distinct images in $\mathbb{F}_q^{y_1}$. We also know that Fr permutes the $2y_1$ roots of $f(x^2)$ cyclically. Furthermore, for all roots $\gamma^{1/2}$ of $f(x^2),$ we must have that Fr$^{y_1}(\gamma^{1/2}) = -\gamma^{1/2}$ since Fr permutes the $y_1$ roots of $f(x)$ cyclically, meaning that Fr acts as an odd $y_1$-cycle on the roots of $f(x^2)$. 

 Before constructing our symplectic basis, we begin with the ordered basis $$\{ 1, \delta, \dots, \delta^{y_1 - 1}, \delta^{1/2}, \delta^{3/2}, \dots, \delta^{y_1 - 1/2} \}.$$ Then the matrix of our multiplication map with respect to this basis has the block form 
\begin{center}
    $\begin{pmatrix}
    0 & A \\
    \Id & 0 
    
    \end{pmatrix}$
\end{center}

\noindent where each block is a $y_1 \times y_1$ matrix. If we let $$F^+_{y_1} = \text{span}_F\{ 1, \delta, \delta^2, \dots, \delta^{y_1 - 1} \} \hspace{0.5in} \text{and} \hspace{0.5in} F^-_{y_1} = \text{span}_F\{ \delta^{1/2}, \delta^{3/2}, \dots, \delta^{y_1 - 1/2} \},$$ then note that any two elements of $F^+_{y_1}$ (resp. $F^-_{y_1}$) are mutually orthogonal with respect to our symplectic form. For $v'$ in $F^-_{y_1}$, the map which sends $v \in F^+_{y_1}$ to $\langle v, v' \rangle$ gives a linear functional on $F_{y_1}^+,$ and since the symplectic form is non-degenerate, we have an isomorphism $F_{y_1}^- \xrightarrow{\sim} (F^+_{y_1})^*,$ where $(F^+_{y_1})^*$ denotes the dual basis of $F_{y_1}^+$. Constructing a dual basis $\{ \gamma_0^*, \dots, \gamma_{y_1 - 1}^* \}$ of $(F_{y_1}^+)^*$ so that $\gamma_i^*(\delta^j) = \begin{cases} 0 & i \neq j \\ 1 & i = j \end{cases}$, we let $B' = \{ 1, \dots, \delta^{y_1 - 1}, \gamma_{y_1 - 1}, \dots, \gamma_0 \},$ where $\gamma_i$ is the inverse image of $\gamma_i^*$ under our isomorphism $F_{y_1}^- \xrightarrow{\sim} (F_{y_1}^+)^*.$ For future use, we let $\gamma_{j+1}^{y_1,+} = \delta^{j}$ and $\gamma_{j+1}^{y_1,-} = \gamma_j$ for all $j \in \{ 0, \dots, y_1 - 1 \}$ so that $B' = \{ \gamma_1^{y_1,+}, \dots, \gamma_{y_1}^{y_1,+}, \gamma_{y_1}^{y_1,-}, \dots, \gamma_{1}^{y_1,-} \}.$

Note that the change of basis matrix from our initial basis to the ordered basis $B'$ has the block-diagonal form $\begin{pmatrix} \Id & 0 \\ 0 & M \end{pmatrix}$ for some $y_1 \times y_1$ matrix $M$, and so the matrix of our multiplication map with respect to $B'$ has the block form

\begin{center}
    $\begin{pmatrix}
    0 & M_{y_1}^{1,2} \\
    M_{y_1}^{2,1} & 0 
    \end{pmatrix}.$
\end{center}

\noindent This is a matrix with entries in $\mathfrak{o}.$

For the other $y_j \in \mu',$ we can repeat this process. The only change we need to make is that if $y_{j_1} = y_{j_2},$ for $j_1 \neq j_2,$ we need to rescale our choice of $a_{y_{j_2}}$ by a representative of a non-trivial class in $\mathbb{F}_q^{\times} / (\mathbb{F}_q^{\times} \cap R_{2y_{j_1}})$, where $R_{2y_{j_1}}$ denotes the group of $2y_{j_1}$ roots of unity. The resulting matrix of $X_{y_{j_2}}$ and its eigenvalues are then scaled by the same element of $\mathbb{F}_q^{\times}$ so that the entries of $X_{y_{j_2}}$ still lie in $\mathfrak{o}$ and the eigenvalues of $X_{y_{j_2}}$ still lie in $\mathfrak{o}_{F^{(y_1)}}^\times$. Since $q > 2n$, we can scale so that for all $y_{j_m} = y_{j_1}$, all of the $a_{j_m}$ are multiplied by representatives of different classes. As with $\mu_0,$ we do this to ensure condition (b) at the start of section 4.1 is satisfied. 

\subsection{The Partition $\mu''$}

For $z_i \in \mu''$, we choose $\delta$ as in the previous section so that $F^{(z_i)} = F(\delta)$ and $F_{z_i} = F(\delta^{1/2}).$ The primary difference in the construction from the previous section is that we set $c_{z_i} = \varpi \delta^{1/2}$, so that the symplectic form is given by $\langle v_1, v_2 \rangle = \frac{1}{2z_i}\text{tr}_{F_{z_i}/F}(\tau(v_1)v_2\varpi \delta^{1/2})$ for $v_1, v_2 \in F_{z_i}.$  We take $a_{z_i} = d_{z_i} \cdot \delta^{1/2}$, where $d_{z_i}$ is an element of $\mathbb{F}_q$ chosen so that for all $z_i = z_j$, $d_{z_i}$ and $d_{z_j}$ are distinct representatives of $\mathbb{F}_q^{\times} / (\mathbb{F}_q^{\times} \cap R_{2z_i})$, where $R_{2z_{j_1}}$ denotes multiplicative group of $2z_i$th roots of unity in $\mathbb{F}_q^{\times}$, and so that for all $z_i = y_k$ for $y_k \in \mu'$, $a_{y_k}$ was not scaled by an element in the same class as $d_{z_i}$ in the previous section. Then our $F$-endomorphism is multiplication by $d_{z_i} \cdot \delta^{1/2}$. We take our initial basis to be $B = \{ 1, \delta, \dots, \delta^{z_i - 1}, \frac{\delta^{1/2}}{\varpi}, \frac{\delta^{3/2}}{\varpi}, \dots, \frac{\delta^{z_i - 1/2}}{\varpi} \}.$ We then construct the change of basis matrix that we would have constructed for the basis $\{ 1, \delta, \dots, \delta^{z_i - 1}, \delta^{1/2}, \delta^{3/2}, \dots, \delta^{z_i - 1/2} \}$ in the $\mu'$ case, and apply it to $B$ to obtain an ordered basis $B' = \{ \gamma_1^{z_i,+}, \dots, \gamma_{z_i}^{z_i,+}, \gamma_{z_i}^{z_i,-}, \dots, \gamma_{1}^{z_i,-} \}$. Then we again have a symplectic basis, and the matrix of the multiplication map $X_{z_i}$ with respect to $B'$ has the block form

\begin{center}
    $\begin{pmatrix}
    0 & M_{z_i}^{1,2} \\
    M_{z_i}^{2,1} & 0 
    \end{pmatrix}.$
\end{center}

\noindent The $z_i \times z_i$ matrix $M_{z_i}^{1,2}$ (resp. $M_{z_i}^{2,1})$ has non-zero entries in $\mathfrak{p}^{-1}$ (resp. $\mathfrak{p}$), and we again see that the eigenvalues of $X_{z_i}$ are elements of $\mathfrak{o}_{F^{(z_i)}}^\times$ with distinct images in $\mathbb{F}_q^{z_i}.$

\subsection{Completing the Proof}

We are now ready to put everything together. Let $$W = \bigoplus_{x_i \in \mu_0} F_{x_i} \oplus \bigoplus_{y_j \in \mu'} F_{y_j} \oplus \bigoplus_{z_k \in \mu''} F_{z_k}.$$

Then $W$ is the space associated to the triple $(\mu_0, \mu', \mu'')$ in the Waldspurger construction. The symplectic form and $F$-endomorphism determining our regular semisimple element are the respective sums of the symplectic forms and $F$-endomorphisms of each subspace, as described in section 4. 

For $x_i \in \mu_0$, set $B_{x_i}^+ = \{ \gamma_1^{x_i,+},\dots, \gamma_{x_i}^{x_i,+} \}$ and $B_{x_i}^- = \{ \gamma_{x_i}^{x_i,-}, \dots, \gamma_{1}^{x_i,-} \} $. Define $B_{y_j}^\pm$ and $B_{z_k}^\pm$ analogously. Fix the ordered basis $$B_W = B_{z_1}^+ \cup \dots \cup B_{z_t}^+ \cup B_{x_1}^+ \cup \dots \cup B_{x_r}^+ \cup B_{y_1}^+ \cup \dots \cup B_{y_s}^+ \cup B_{y_s}^- \cup \dots \cup B_{y_1}^- \cup B_{x_r}^- \cup \dots \cup B_{x_1}^- \cup B_{z_t}^- \cup \dots \cup B_{z_1}^-.$$

By our work in the previous subsections, we know that the matrix of our symplectic form on $W$ has the form

\begin{center}
$\begin{pmatrix}
& & & & & 1 \\
& & & & \iddots & \\
& & & 1 & & \\
& & -1 & & & \\
& \iddots & & & & \\
-1 & & & & & \\
\end{pmatrix}$
\end{center}

\noindent with respect to the ordered basis $B_W$ so that we have an obvious isomorphism from $W$ into $V$. With respect to $B_W$, we have that our $F$-endomorphism (which again is a regular semisimple element of our unramified torus), has the block form

\begin{center}
$\begin{pmatrix}
M^{1,1} & M^{1,2} \\
M^{2,1} & M^{2,2}
\end{pmatrix}$,
\end{center}

\noindent where each block has size $n \times n$. These blocks $M^{i,j}$ are in turn block matrices. The matrix $M^{1,1}$ is block diagonal of the form

\begin{center}
    $\begin{pmatrix}
    0 & & & & & & & & \\
    & \ddots & & & & & & & \\
    & & 0 & & & & & & \\
    & & & M_{x_1}^{1,1} & & & & & \\
    & & & & \ddots & & & & \\
    & & & & & M_{x_r}^{1,1} & & & \\
    & & & & & & 0 & & \\
    & & & & & & & \ddots & \\
    & & & & & & & & 0 \\
    \end{pmatrix}$,
\end{center}

\noindent where the blocks have size $z_1 \times z_1, \dots, z_t \times z_t, x_1 \times x_1, \dots, x_r \times x_r, y_1 \times y_1, \dots, y_s \times y_s$ respectively. Similarly, the block $M^{2,2}$ is block diagonal of the form

\begin{center}
    $\begin{pmatrix}
    0 & & & & & & & & \\
    & \ddots & & & & & & & \\
    & & 0 & & & & & & \\
    & & & M_{x_r}^{2,2} & & & & & \\
    & & & & \ddots & & & & \\
    & & & & & M_{x_1}^{2,2} & & & \\
    & & & & & & 0 & & \\
    & & & & & & & \ddots & \\
    & & & & & & & & 0
    \end{pmatrix}$,
\end{center}

\noindent where the listed blocks have size $y_s \times y_s, \dots, y_1 \times y_1, x_r \times x_r, \dots, x_1 \times x_1, z_t \times z_t, \dots, z_1 \times z_1$. The matrices $M^{1,2}$ and $M^{2,1}$ are block anti-diagonal. $M^{1,2}$ has the form

\begin{center}
    $\begin{pmatrix}
    & & & & & & & & M_{z_1}^{1,2} \\
    & & & & & & & \iddots & \\
    & & & & & & M_{z_t}^{1,2} & & \\
    & & & & & 0 & & & \\
    & & & & \iddots & & & & \\
    & & & 0 & & & & & \\
    & & M_{y_1}^{1,2} & & & & & & \\
    & \iddots & & & & & & & \\
    M_{y_s}^{1,2} & & & & & & & &

    \end{pmatrix}$,
\end{center}

\noindent where the listed blocks have size, from top to bottom, $z_1 \times z_1, \dots, z_t \times z_t, x_1 \times x_1, \dots, x_r \times x_r, y_1 \times y_1, \dots, y_s \times y_s$, and $M^{2,1}$ has the form

\begin{center}
    $\begin{pmatrix}
    & & & & & & & & M_{y_s}^{2,1} \\
    & & & & & & & \iddots & \\
    & & & & & & M_{y_1}^{2,1} & & \\
    & & & & & 0 & & & \\
    & & & & \iddots & & & & \\
    & & & 0 & & & & & \\
    & & M_{z_t}^{2,1} & & & & & & \\
    & \iddots & & & & & & & \\
    M_{z_1}^{2,1} & & & & & & & &

    \end{pmatrix}$,
\end{center}

\noindent where the listed blocks have size, from top to bottom, $y_s \times y_s, \dots, y_1 \times y_1, x_r \times x_r, \dots, x_1 \times x_1, z_t \times z_t, \dots, z_1 \times z_1$. Henceforth, we identify our $F$-endomorphism of $W$ with the matrix we have constructed above, and we will write $X_W$ for both.

Thus $X_W$ is the regular semisimple element for our torus arising from the Waldspurger construction. We now determine a pair $(\textsf{F},w)$ giving our torus in the DeBacker parameterization. We begin with $w$. To compute $w$, we first note that there is a diagonal matrix $s$ in the Lie algebra of $S$ and an element $g \in \textbf{G}(F_{\text{un}})$ so that $X_w ={} ^gs$. Then since $X_W$ is $F$-rational, we have that $n = g^{-1}\text{Fr}(g)$ is an element of $N_G(S)$ sending $\text{Fr}(s)$ to $s$, and we know from \cite{adler} that $w$ is the image of this element in the Weyl group. But by linear algebra, note that the columns of $g$ must be an eigenbasis for the endomorphism $X_W$. Since $X_W$ is a regular semisimple element in the Lie algebra of Sp$_{2n}$ its eigenvalues are distinct, and so $g^{-1}\text{Fr}(g)$ is a permutation matrix, where the cycle type of the permutation is determined by how Fr permutes the eigenvalues of $X_W$. But by our previous analysis, each $x_i \in \mu_0$ gives us an even cycle of type $A_{x_i - 1}$ in the Carter notation, while each $y_j \in \mu'$ and $z_k \in \mu''$ gives us an odd cycle of type $C_{y_j}$ or $C_{z_k}$ respectively. Thus the Weyl group element $w$ associated to $X_W$ is of type 

$$(C_{z_1} \times \dots \times C_{z_t}) \times A_{x_1 - 1} \times \dots \times A_{x_r - 1} \times (C_{y_1} \times \dots \times C_{y_s})$$

\noindent in the Carter notation.

To conclude the proof, we need to show that the image of $X_W$ in Lie($G_{\textsf{F}}$)/Lie($G_{\textsf{F}}^+$) is still a regular semisimple element, where $\textsf{F}$ denotes the facet in the fundamental alcove $\mathcal{C}$ of type $\langle \text{S}(\mu'')|x_1, \dots, x_t|\text{S}(\mu') \rangle$. To do this, we claim that it suffices to check

\begin{itemize}
    \item The eigenvalues of $X_W$ lie in $\mathfrak{o}_{F_{\text{un}}}^\times$ and are distinct in the reductive quotient.
    
    \item Every non-zero entry in the root space of a root $\alpha$ in the subsystem $\Phi'$ spanned by $\{ -e, \alpha_1, \dots, \alpha_{\text{S}(\mu'') - 1} \}$ lies in $\mathfrak{o}$ if the coefficient of $-e$ in $\alpha$ is 0 with respect to the simple system $\{ -e, \alpha_1, \dots, \alpha_{\text{S}(\mu'') - 1} \}$ of $\Phi'$, lies in $\mathfrak{p}$ if the coefficient of $-e$ is positive, or lies in $\mathfrak{p}^{-1}$ if the coefficient of $-e$ is negative.
    
    \item Every non-zero entry in the root space of a root $\alpha$ in the subsystem spanned by the simple roots $\alpha_j$ corresponding to the vertices in the subdiagram of the extended Dynkin diagram associated to a part $x_i \in \mu_0$ lies in $\mathfrak{o}$.
    
    \item Every non-zero entry in the root space of a root $\alpha$ in the subsystem spanned by the simple roots $\beta$ and $\alpha_{n - \text{S}(\mu') + 1}, \dots, \alpha_{n-1}$ is in $\mathfrak{o}$.
    
    \item Every non-zero diagonal entry is in $\mathfrak{o}$. 
    
    \item Every other entry is zero.
\end{itemize}

The last five conditions ensure that the element $X_W$ lies in the Lie algebra of the parahoric at our facet, and the first ensures that the image $\textsf{X}_W$ of $X_W$ in the associated reductive quotient is a regular semisimple element of Lie$(\textsf{G}_\textsf{F})$. Note that the centralizer of $\textsf{X}_W$, call it $\textsf{T}$, in the quotient $G_{\textsf{F}}/G_{\textsf{F}}^+$ is a maximal $\mathbb{F}_q$-torus that corresponds to the image of $C_G(X_W) \cap G_{\textsf{F}}$ in $G_{\textsf{F}}/G_{\textsf{F}}^+$. By \cite{debacker}, the maximal unramified torus $C_G(X_W)$ is a lift of $\textsf{T}$.

We now check that $X_W$ does in fact satisfy the six conditions. The first condition follows from our choice of the elements $a_i$ in the preceding sections for each partition. In particular, we showed that the eigenvalues associated to each part are distinct elements of $\mathfrak{o}_{F_{\text{un}}}^\times$, and since they are the roots of the minimal polynomial of an unramified extension, they have distinct image in the residue field. Furthermore, by scaling at the end of each subsection whenever one of our partitions contains parts that are equal, we ensured that eigenvalues of $X_W$ in the direct sum are also distinct elements of $\mathfrak{o}_{F_{\text{un}}}^\times$ which still have distinct image in $\overline{\mathbb{F}}_q^\times$.

We can see that all of the remaining conditions hold from the block form of our matrix $X_W$. In the first S$(\mu'')$ columns and last S$(\mu'')$ columns of $X_W$, all non-zero entries are in $M^{2,1}$ and $M^{1,2}$ respectively. In particularly, they lie in a block of the form $M_{z_i}^{2,1}$ or $M_{z_i}^{1,2}$. The entries in the blocks $M_{z_i}^{2,1}$ correspond to root spaces of roots in the subsystem spanned by $\{ -e, \alpha_1, \dots, \alpha_{\text{S}(\mu'') - 1} \}$ so that the coefficient of $-e$ is positive. By our construction in the previous section, we know that the non-zero entries of $M_{z_i}^{2,1}$ lie in $\mathfrak{p}$. Similarly, the entries in the blocks $M_{z_i}^{1,2}$ correspond to root spaces of roots so that the coefficient of $-e$ is negative, and again by our construction in the previous section, we know that the non-zero entries of $B_{z_i}^{1,2}$ lie in $\mathfrak{p}^{-1}.$ 

For $x_i \in \mu_0$, we consider the columns $S(\mu'') + x_1 + \dots + x_{i - 1} + 1, \dots, S(\mu'') + x_1 + \dots + x_{i - 1} + x_i$ and $2n - (S(\mu'') + x_1 + \dots + x_{i}) + 1, \dots, 2n - (S(\mu'') + x_1 + \dots + x_{i-1})$. Then the only non-zero entries are in the matrices $M_{x_i}^{1,1}$ and $M_{x_i}^{2,2}$. The non-zero entries of these matrices are in $\mathfrak{o}$. They contain the diagonal entries of $X_W$ lying in the columns, and they also contain the root spaces of the roots in the span of $\alpha_{S(\mu'') + x_1 + \dots + x_{i - 1} + 1}, \dots, \alpha_{S(\mu'') + x_1 + \dots x_i - 1}$, which are the simple roots corresponding to the vertices in the subdiagram of the extended Dynkin diagram associated to $x_i$.

Finally, in the middle $2 \cdot$ S$(\mu')$ columns of $X_W$, all non-zero entries are in the blocks of the form $M_{y_i}^{2,1}$ or $M_{y_i}^{1,2}.$ But the non-zero entries of these blocks are in $\mathfrak{o}$, and the entries of the blocks are root spaces of roots in the subsystem spanned by $\beta$ and $\alpha_{n - \text{S}(\mu')}, \dots, \alpha_{n-1}$.

Consequently, $X_W$ defines the torus associated to the pair $(\textsf{F},w),$ and so we are done.

\section{Inverse Map}

The construction in the previous section produces one pair $(\textsf{F},w)$ in the equivalence class associated to our maximal unramified torus. It is natural to ask whether we can obtain other pairs $(\textsf{F}',w')$ in our equivalence
class so that we may construct an inverse map. To answer this, note that our construction only produces facets of the form $\langle a|x_1, \dots, x_r|b \rangle$ for $x_1 \geq \dots \geq x_r$. However, in our construction of $W$ in the previous section, note that if $\sigma$ is a permutation of the parts of $\mu_0$, then we can permute the sub-bases $B_{x_i}^+$ by $\sigma$ when choosing our ordered basis $B_W$, as long as we also permute the $B_{x_i}^-$ analogously. Then with respect to this permuted ordered basis, the matrix of $X_W$ will still be of type $w$, and it will define an element in the same $G$-conjugacy class. However, now it will be regular semisimple in the reductive quotient of the facet of type $\langle \text{S}(\mu'')| \sigma(x_1), \dots, \sigma(x_r)| \text{S}(\mu') \rangle$.

Consequently, given a pair $(\textsf{F},w),$ where $\textsf{F}$ is of the form $\langle a | x_1, \dots, x_r | b \rangle$ and $w \in W_{\textsf{F}}$ is elliptic of type $(C_{a_1} \times \dots \times C_{a_t}) \times A_{x_1 - 1} \times \dots \times A_{x_r - 1} \times (C_{b_1} \times \dots \times C_{b_s})$, then we can define $\mu_0$ to be the unique partition containing the parts $x_1, \dots, x_r$, $\mu'$ to be the partition containing the parts $b_1, \dots, b_s$, and $\mu''$ to be the partition containing the parts $a_1, \dots, a_t$. Then the $F$-conjugacy class associated to the triple $(\mu_0, \mu', \mu'')$ is the $F$-conjugacy class associated to $(\textsf{F},w)$.


\begin{thebibliography}{15}
\bibitem{adler}
J. Adler and S. DeBacker,
\textit{Parameterizing Unramified Tori},
Preprint.

\bibitem{nevins2}
T. Bernstein, J. Ma, M. Nevins, and J. W. Yap,
\textit{Nilpotent Orbits of Orthogonal Groups over p-adic Fields, and the DeBacker Parametrization},
Algebr. Represent. Theory 23 (2020), no. 5, 2033–2058.

\bibitem{bt1}
F. Bruhat and J. Tits,
\textit{Groupes réductifs sur un corps local. I. Données radicielles
valuées},
Inst. Hautes Études Sci. Publ. Math. 41 (1972), 5–251

\bibitem{bt2}
F. Bruhat and J. Tits,
\textit{Groupes réductifs sur un corps local. II. Schémas en groupes. Existence d’une
donnée radicielle valuée},
Inst. Hautes Études Sci. Publ. Math. 60 (1984), 197–376.

\bibitem{carter1}
R. Carter,
\textit{Conjugacy classes in the Weyl group},
Compositio Math. 25 (1972), 1–59.

\bibitem{carter2}
R. Carter,
\textit{Finite groups of Lie type. Conjugacy classes and complex characters},
Wiley, Chichester, 1993 [1985].

\bibitem{debacker}
S. DeBacker,
\textit{Parameterizing Conjugacy Classes
of Maximal Unramified Tori
via Bruhat–Tits Theory},
Michigan Math. J. 54 (2006), no. 1, 157–178.

\bibitem{nilporb}
S. DeBacker,
\textit{Parametrizing nilpotent orbits via Bruhat-Tits theory},
Ann. of Math. (2) 156 (2002), no. 1, 295–332.

\bibitem{morris}
L. Morris,
\textit{Some tamely ramified supercuspidal representations of symplectic groups},
Proc. London Math. Soc. (3) 63 (1991), no. 3, 519–551.

\bibitem{nevins1}
M. Nevins,
\textit{On nilpotent orbits of SL$_n$ and Sp$_{2n}$ over a local non-Archimedean field},
Algebr. Represent. Theory 14 (2011), no. 1, 161–190.

\bibitem{titscorv}
J. Tits,
\textit{Reductive groups over local fields},
Automorphic forms, representations,
and L-functions (A. Borel, W. Casselman, eds.), Proc. Sympos. Pure Math., 33,
pp. 29–69, Amer. Math. Soc., Providence, RI, 1979.

\bibitem{wald}
J. L. Waldspurger,
\textit{Intégrales orbitales nilpotentes et endoscopie pour les groupes classiques
non ramifiés},
Astérisque 269 (2001).

\end{thebibliography}
\end{document}